\title{Diffusion constants and martingales for senile random walks}
\author{Wouter Kager\footnote{Address: EURANDOM, P.O.~Box 513,
5600~MB Eindhoven, The Netherlands.}}
\date{November 19, 2007}
\newcommand{\N}{\mathbbm{N}}					
\newcommand{\Z}{\mathbbm{Z}}					
\newcommand{\I}{\mathbbm{1}}					
\newcommand{\Prob}{\mathbbm{P}}					
\newcommand{\Exp}{\mathbbm{E}}					
\newcommand{\fld}[1]{\mathcal{#1}}				
\newcommand{\asto}{\xrightarrow{\text{a.s.}}}	
\newcommand{\weak}{\Rightarrow}					
\numberwithin{equation}{section}
\newtheorem{theorem}{Theorem}[section]
\newtheorem{proposition}[theorem]{Proposition}
\begin{document}

\maketitle

\begin{abstract}
	We derive diffusion constants and martingales for senile random walks with 
	the help of a time-change. We provide direct computations of the diffusion 
	constants for the time-changed walks. Alternatively, the values of these 
	constants can be derived from martingales associated with the time-changed 
	walks. Using an inverse time-change, the diffusion constants for senile 
	random walks are then obtained via these martingales. When the walks are 
	diffusive, weak convergence to Brownian motion can be shown using a 
	martingale functional limit theorem.
\end{abstract}

\section{Introduction and general framework}

In this paper we study random walks on $\Z^d$ for dimensions $d\geq1$, which 
can be viewed as time-changes of random walks that were named \emph{senile 
reinforced} and \emph{senile persistent} random walks in~\cite{HS07}. We will 
use this terminology also in this paper, although senile persistent random 
walks were originally introduced and studied under the name of 
\emph{directionally reinforced} random walks in~\cite{HS98, MMW96}. The 
reinforcement of senile random walks is of a different kind than that of more 
traditional edge reinforced random walks, as introduced by Coppersmith and 
Diaconis~\cite{CD86}. For more details and discussion, we refer to the 
introductions and references in~\cite{HS07,HS98,MMW96}, and to the recent 
survey paper~\cite{P07} on reinforced random processes.

Recurrence and transience properties of senile random walks were studied in 
the papers~\cite{HS07,MMW96}, and scaling limits are identified in~\cite{H07, 
HS98}. In this paper, rather than taking the senile random walks themselves as 
our starting point, we start by studying other random walks that are later 
interpreted as time-changes of senile random walks. The idea of looking at 
these time-changed walks has also been used in the mentioned references.  
However, this paper presents a different approach to identifying the diffusion 
constants and weak limits of the walks under study, using mainly martingale 
techniques.

Indeed, below we will provide new, direct calculations of the diffusion 
constants for the time-changed random walks, and we show that these random 
walks are close to martingales (for the persistent case, this has also been 
observed in~\cite{HS98}). Using martingale theory, we can then derive the 
diffusion constants for the senile random walks by an inverse time-change.  
This confirms that Theorem~2.5 in~\cite{HS07} holds under a slightly weaker 
moment condition, as conjectured by the authors. Finally, we will show that 
under appropriate conditions for which the walks are diffusive, weak 
convergence of senile random walks to Brownian motion follows from a 
martingale central limit theorem.

We will now introduce a general framework for the time-changed walks we want 
to study below. Generally, the walks are described by a sequence $W = 
(W_1,W_2,\dots)$ of random variables taking values in~$\Z^d$. For each $n\in\N 
:= \{ 1,2,\dots \}$, we will write $W_n$ (the position at time~$n$) as the sum 
of~$n$ random steps, where the $m$th step ($m\in\N$) has a direction~$D_m$ 
taking values in $\{ e_1,e_2,\dots,e_{2d} \}$, the unit vectors of~$\Z^d$, and 
a length $L_m \in \{ 0,1,2,\dots \}$.

Actually, for the single purpose of relating our walks to senile random walks 
later on, we will write each step length~$L_m$ as a function of a random 
variable~$T_m$ taking values in $\N = \{ 1,2,\dots \}$. These variables~$T_m$ 
are i.i.d.\ (hence, so are the step lengths) and define the random time-change 
linking our random walks to senile random walks. Below, we will use the 
notation~$T$ for a generic variable distributed as any one of the~$T_m$. The 
distribution of the random times~$T_m$ is specified in terms of a function $f: 
\N \to [-1,\infty)$ (the reinforcement function) by $\Prob(T \geq 1) = 1$ and
\begin{align}
\label{T}
	\Prob(T \geq k) = \prod_{l=1}^{k-1} \frac{1+f(l)}{2d+f(l)}
		\qquad \text{for } k = 2,3,\dots
\end{align}
This specific form of the distribution of the~$T_m$ is introduced only to make 
the link with senile random walk. For now, we do not put any restrictions on 
the function~$f$, but later on, we will require that either $\Exp(T)$ is 
finite or both $\Exp\bigl( T^2 \bigr)$ and $\Exp(T)$ are finite, depending on 
whether we consider the reinforced or the persistent case.

Thus, following the description above, we can write
\begin{equation}
\label{rwW}
	W_n := \sum_{m=1}^n D_m \, L_m \qquad \text{for all }n\in\N,
\end{equation}
where the laws of the $D_m$ and~$L_m$ are yet to be specified. In sections 
\ref{sec:persistent} and~\ref{sec:reinforced} we consider two specific 
instances of this general class of random walks, related to senile persistent 
and senile reinforced random walks, respectively. Our first aim will be to 
compute the \emph{diffusion constants} for these walks, which for a general 
walk $X = (X_1,X_2,\dots)$ is defined by
\begin{equation}
\label{dcX}
	C_X := \lim_{n\to\infty} \frac{1}{n} \, \Exp\bigl( |X_n|^2 \bigr),
\end{equation}
provided the limit exists and is finite. To find the diffusion constants for 
the senile random walks, we will then make use of martingales associated with 
the time-changed walks, and these martingales will also be used to prove weak 
convergence to Brownian motion when the senile walks are diffusive.

\section{The persistent case}
\label{sec:persistent}

We start with the persistent case, for which the definition of the walk is 
somewhat easier than in the reinforced case, but the analysis is harder. In 
this case, we take
\begin{equation}
	\label{Lp}
	L_m = T_m \qquad \text{for all } m\in\N,
\end{equation}
and the directions of different steps obey the rule that the direction at each 
step has to  be different from the direction at the previous step, but all 
remaining choices of direction are equally likely. Formally, this means that 
the directions~$D_m$ satisfy
\begin{equation}
	\label{D1p}
	\Prob(D_1 = e_i) = \frac{1}{2d} \qquad \text{for each } i=1,2,\ldots,2d,
\end{equation}
and for all $m\in\N$,
\begin{equation}
	\label{Dp}
	\Prob(D_{m+1} = e_i \mid D_m) \\
	 = \frac{1}{2d-1} \, \I(D_m\neq e_i) \quad \text{for each } 
	 i=1,2,\ldots,2d,
\end{equation}
where $\I(A)$ is the indicator of the event~$A$. Equations 
\eqref{Lp}--\eqref{Dp} completely specify the law of the random walk defined 
by~\eqref{rwW}. For the remainder of this section we will write $W^p = (W^p_1, 
W^p_2, \dots)$ for this walk, where the superscript~$p$ is used to single out 
the \emph{persistent} case studied here.

\subsection{Direct calculation of the diffusion constant}

We will now provide a direct calculation of the diffusion constant for the 
random walk~$W^p$ defined above. It will be clear from the computation that we 
have to require that $\Exp\bigl( T^2 \bigr) < \infty$ (which implies $\Exp(T) 
< \infty$). The diffusion constant is then given by the following 
proposition.

\begin{proposition}
\label{pro:dcWp}
	Assume $\Exp\bigl( T^2 \bigr) < \infty$. Then the diffusion constant of 
	the random walk~$W^p$ is given by
	\[
		C^p := \lim_{n\to\infty} \frac{1}{n} \, \Exp\bigl( |W^p_n|^2 \bigr) = 
		\frac{d \, \Exp\bigl( T^2 \bigr)-\Exp(T)^2}{d}.
	\]
\end{proposition}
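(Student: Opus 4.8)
The plan is to compute $\Exp(|W^p_n|^2)$ by expanding the square and exploiting the Markovian structure of the direction sequence $(D_m)$. First I would write
\[
	|W^p_n|^2 = \sum_{m=1}^n L_m^2 + 2 \sum_{1 \le j < k \le n} L_j L_k \, (D_j \cdot D_k),
\]
and take expectations. Since the $L_m = T_m$ are i.i.d.\ and independent of the directions, the diagonal terms contribute $n \, \Exp(T^2)$, so the interesting part is the cross terms, which reduce to $\Exp(L_j L_k) \, \Exp(D_j \cdot D_k)$ for $j<k$; note $\Exp(L_j L_k) = \Exp(T)^2$ here because distinct indices give independent lengths. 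So everything hinges on computing $\Exp(D_j \cdot D_k)$.

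The key step is to analyze the correlation $\Exp(D_j \cdot D_k)$. Because $D_{k}$ depends on the past only through $D_{k-1}$, and given $D_{k-1} = e_i$ the next step is uniform over the other $2d-1$ unit vectors, one gets $\Exp(D_k \mid D_{k-1}) = -\frac{1}{2d-1} D_{k-1}$ (the sum of all $2d$ unit vectors is $0$, so the sum of the $2d-1$ admissible ones is $-D_{k-1}$). Iterating, $\Exp(D_k \cdot D_j \mid D_j) = \bigl(-\tfrac{1}{2d-1}\bigr)^{k-j} |D_j|^2 = \bigl(-\tfrac{1}{2d-1}\bigr)^{k-j}$, hence $\Exp(D_j \cdot D_k) = \bigl(-\tfrac{1}{2d-1}\bigr)^{k-j}$. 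Plugging this in, the cross-term sum becomes $2 \Exp(T)^2 \sum_{1 \le j < k \le n} \bigl(-\tfrac{1}{2d-1}\bigr)^{k-j}$, which is a geometric-type double sum; summing the inner geometric series over $k-j \ge 1$ gives $\sum_{r \ge 1} \bigl(-\tfrac{1}{2d-1}\bigr)^r = \frac{-1/(2d-1)}{1 + 1/(2d-1)} = \frac{-1}{2d}$, and the number of pairs at separation $r$ is $n-r$, so to leading order in $n$ the double sum is $\bigl(-\tfrac{1}{2d}\bigr) n + O(1)$.

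Combining, $\Exp(|W^p_n|^2) = n \Exp(T^2) + 2 \Exp(T)^2 \bigl(-\tfrac{1}{2d}\bigr) n + O(1) = n \bigl( \Exp(T^2) - \tfrac{1}{d} \Exp(T)^2 \bigr) + O(1)$, so dividing by $n$ and letting $n \to \infty$ yields $C^p = \Exp(T^2) - \Exp(T)^2/d = \bigl( d \, \Exp(T^2) - \Exp(T)^2 \bigr)/d$, as claimed. The only place where care is needed is the bookkeeping of the $O(1)$ remainder in the double geometric sum and the interchange of limit and summation — but since $|2d-1| > 1$ for $d \ge 1$ (with the degenerate case $d=1$ giving ratio $-1$ handled separately, where the alternating sum is still bounded), the error terms are uniformly controlled, so this is routine rather than a genuine obstacle. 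The moment assumption $\Exp(T^2) < \infty$ is used precisely to make the diagonal term $n \Exp(T^2)$ finite; finiteness of $\Exp(T)$ (which it implies) covers the cross terms.
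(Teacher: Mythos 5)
Your proposal is correct and follows essentially the same route as the paper: expand $|W^p_n|^2$, factor out $\Exp(T)^2$ from the cross terms by independence of the lengths, establish $\Exp(D_j\cdot D_k)=\bigl(-\tfrac{1}{2d-1}\bigr)^{k-j}$, and sum the resulting geometric series (the paper derives the direction correlation via a recursion splitting on whether consecutive directions are orthogonal or antiparallel, while you read off $\Exp(D_k\mid D_{k-1})=-\tfrac{1}{2d-1}D_{k-1}$ directly from the fact that the admissible directions sum to $-D_{k-1}$, which is the same computation). Your handling of the $d=1$ boundary case and the $O(1)$ remainder is also consistent with the paper's exact closed-form evaluation of the double sum.
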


\begin{proof}
	It is easy to see that
	\begin{equation}
	\label{dcWp}
		\Exp\bigl( |W^p_n|^2 \bigr) = n \, \Exp\bigl( T^2 \bigr) + 2 \, 
		\sum_{k=1}^{n-1} \sum_{m=1}^{n-k} \Exp( D_m \cdot D_{m+k} \, L_m 
		L_{m+k} ),
	\end{equation}
	where for all $m,k \geq 1$, by independence of the step lengths,
	\begin{equation}
		\Exp( D_m \cdot D_{m+k} \, L_m L_{m+k} ) = \Exp(T)^2 \, \Exp( D_m 
		\cdot D_{m+k} ).
	\end{equation}
	Now note that on the event $E_{mk} := \{D_{m+k-1} \cdot D_{m+k} = 0\}$, 
	$D_m \cdot D_{m+k}$ takes on the values~$\pm1$ with equal probabilities, 
	by~\eqref{Dp}. On the other hand, on the complementary event $E_{mk}^c$, 
	we have that $D_{m+k} = -D_{m+k-1}$. Therefore, using independence again,
	\begin{equation}
	\begin{split}
		\Exp( D_m \cdot D_{m+k} )
		&= \Exp\bigl( D_m \cdot D_{m+k} \, \I(E_{mk}) \bigr)
			- \Exp\bigl( D_m \cdot D_{m+k-1} \, \I(E_{mk}^c) \bigr) \\
		&= -\frac{1}{2d-1} \, \Exp( D_m \cdot D_{m+k-1} ).
	\end{split}
	\end{equation}
	Iterating this recursion relation, it follows that
	\begin{equation}
		\Exp\left( D_m \cdot D_{m+k} \right)
		= \left( \frac{-1}{2d-1} \right)^k.
	\end{equation}
	Plugging this expression into~\eqref{dcWp}, we obtain
	\begin{equation}
	\begin{split}
		\Exp\bigl( |W^p_n|^2 \bigr)
		&= n \, \Exp\bigl( T^2 \bigr) + 2 \, \Exp(T)^2 \, \sum_{k=1}^{n-1} \, 
		(n-k) \, \left( \frac{-1}{2d-1} \right)^k \\
		&= n \, \Exp\bigl( T^2 \bigr) - \Exp(T)^2 \, \left( \frac{n}{d} + 
		\frac{2d-1}{2d^2} \, \left[ \left( \frac{-1}{2d-1} \right)^n - 1 
		\right] \right).
	\end{split}
	\end{equation}
	By~\eqref{dcX}, this equation identifies the value of the diffusion 
	constant if we divide by~$n$ and take the limit $n\to\infty$.
\end{proof}

\subsection{Martingales for the persistent random walk}

The purpose of this subsection is to show that the walk~$W^p$ is within 
bounded distance from a martingale at each step. More precisely, we will see 
that adding a correction of constant length to each position~$W^p_n$ gives us 
a martingale. In fact, Proposition~\ref{pro:Mp} below identifies a second 
martingale by direct calculation, which can be used to provide an alternative 
derivation of the diffusion constant for the walk~$W^p$.

To state our result, we introduce the filtration $\{\fld{F}_n : n\in\N\}$, 
where
\begin{equation}
	\fld{F}_n := \sigma(D_1,T_1,D_2,T_2,\dots,D_n,T_n) \qquad \text{for all } 
	n\in\N.
\end{equation}
Now define a new random walk $M^p$ by
\begin{equation}
\label{Mp}
	M^p_n := W^p_n - \frac{\Exp(T)}{2d} \, D_n \qquad \text{for all } n\in\N.
\end{equation}
As before, we assume that $\Exp\bigl( T^2 \bigr) < \infty$. Then the following 
proposition identifies two martingales associated with the walk~$W^p$.

\begin{proposition}
\label{pro:Mp}
	Let $C^p$ be the diffusion constant appearing in 
	Proposition~\ref{pro:dcWp}. Then $\{ (M^p_n, \fld{F}_n) : n\in\N \}$ and 
	$\bigl\{ \bigl( |M^p_n|^2 - n \, C^p, \fld{F}_n \bigr) : n\in\N \bigr\}$ 
	are martingales.
\end{proposition}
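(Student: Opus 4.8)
The plan is to verify the two martingale properties directly by computing conditional expectations with respect to $\fld{F}_n$, exploiting the Markov-type structure of the directions given in~\eqref{Dp} and the independence of the $T_m$. For the first martingale, I would compute $\Exp(M^p_{n+1} \mid \fld{F}_n)$. Writing $M^p_{n+1} = W^p_n + D_{n+1} L_{n+1} - \frac{\Exp(T)}{2d} D_{n+1}$ and using that $L_{n+1} = T_{n+1}$ is independent of $\fld{F}_n$ and of $D_{n+1}$, this reduces to $W^p_n + \bigl( \Exp(T) - \frac{\Exp(T)}{2d} \bigr) \Exp(D_{n+1} \mid \fld{F}_n)$. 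By~\eqref{Dp}, $\Exp(D_{n+1} \mid \fld{F}_n) = \Exp(D_{n+1} \mid D_n) = -\frac{1}{2d-1} D_n$, since the $2d-1$ unit vectors different from $D_n$ sum to $-D_n$. Substituting, the coefficient becomes $\Exp(T) \cdot \frac{2d-1}{2d} \cdot \bigl( -\frac{1}{2d-1} \bigr) = -\frac{\Exp(T)}{2d}$, so $\Exp(M^p_{n+1} \mid \fld{F}_n) = W^p_n - \frac{\Exp(T)}{2d} D_n = M^p_n$, as required. Integrability follows from $\Exp(T) < \infty$ and boundedness of the $D_m$.

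For the second martingale, I would expand $|M^p_{n+1}|^2 = |M^p_n + (M^p_{n+1} - M^p_n)|^2 = |M^p_n|^2 + 2 M^p_n \cdot (M^p_{n+1} - M^p_n) + |M^p_{n+1} - M^p_n|^2$ and take $\Exp(\cdot \mid \fld{F}_n)$. The cross term vanishes by the first martingale property (since $M^p_n$ is $\fld{F}_n$-measurable), so I need to show $\Exp\bigl( |M^p_{n+1} - M^p_n|^2 \mid \fld{F}_n \bigr) = C^p$. Writing the increment as $D_{n+1}\bigl(T_{n+1} - \frac{\Exp(T)}{2d}\bigr) + \frac{\Exp(T)}{2d} D_n$ (using $M^p_{n+1} - M^p_n = D_{n+1}L_{n+1} - \frac{\Exp(T)}{2d}D_{n+1} + \frac{\Exp(T)}{2d}D_n$... but note the $+\frac{\Exp(T)}{2d}D_n$ comes from $-\frac{\Exp(T)}{2d}D_{n+1}$ shifting), I would expand the square using $|D_{n+1}|^2 = |D_n|^2 = 1$, $D_{n+1}\cdot D_n$, and the independence of $T_{n+1}$ from everything in $\fld{F}_n$ and from $D_{n+1}$. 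The key inputs are $\Exp(T_{n+1}) = \Exp(T)$, $\Exp(T_{n+1}^2) = \Exp(T^2)$, and again $\Exp(D_{n+1} \mid \fld{F}_n) = -\frac{1}{2d-1}D_n$, so $\Exp(D_{n+1}\cdot D_n \mid \fld{F}_n) = -\frac{1}{2d-1}$. Collecting all terms should yield a constant equal to $C^p = \frac{d\,\Exp(T^2) - \Exp(T)^2}{d}$.

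The main obstacle is the bookkeeping in the second computation: one must carefully track which cross terms survive the conditioning, separate the $T$-expectations (which factor out cleanly by independence) from the $D$-expectations (which require~\eqref{Dp}), and then confirm that the algebra collapses exactly to $C^p$ with no residual $n$-dependence. It is worth double-checking the shift $-\frac{\Exp(T)}{2d}D_{n+1}$ in $M^p_{n+1}$ versus $-\frac{\Exp(T)}{2d}D_n$ in $M^p_n$, since the increment mixes step $n$ and step $n+1$ directions and this cross structure is precisely what makes the constant come out right. A sanity check is that plugging $d=1$, where $T$ is deterministic (as persistent walk in one dimension just alternates direction with fixed run lengths only if $f\equiv\infty$—more generally one can check consistency with Proposition~\ref{pro:dcWp} by summing increments), should reproduce the formula; more robustly, since Proposition~\ref{pro:dcWp} already gives $\Exp(|W^p_n|^2)/n \to C^p$ and $|M^p_n - W^p_n|$ is bounded, the telescoping sum $\Exp(|M^p_n|^2) = \sum_{k=0}^{n-1} \Exp(|M^p_{k+1} - M^p_k|^2)$ forces the per-step increment variance to average to $C^p$, which pins down the constant and serves as an independent verification of the algebra.
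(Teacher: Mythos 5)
Your proposal is correct and follows essentially the same route as the paper: a direct computation of the one-step conditional expectations using the increment $M^p_{n+1}-M^p_n = D_{n+1}\bigl(T_{n+1}-\tfrac{\Exp(T)}{2d}\bigr)+\tfrac{\Exp(T)}{2d}D_n$, the independence of $T_{n+1}$, and the law of $D_{n+1}$ given $D_n$. The only (harmless) organizational differences are that you extract $\Exp(D_{n+1}\mid\fld{F}_n)=-\tfrac{1}{2d-1}D_n$ directly from~\eqref{Dp} where the paper splits on the events $\{D_n\cdot D_{n+1}=0\}$ and its complement, and that for the second martingale you kill the cross term with $M^p_n$ via the first martingale property rather than expanding $|M^p_{n+1}|^2$ in full; the remaining algebra does collapse to $C^p$ as you claim.
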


\begin{proof}
	The essential ingredients for the proof are: (i) that the events $E_{n1} 
	:= \{D_n \cdot D_{n+1} = 0\}$ and its complement $E_{n1}^c$ are 
	independent of the events in~$\fld{F}_n$, (ii) that on the event 
	$E_{n1}^c$, $D_{n+1} = -D_n$, and (iii) that on the event $E_{n1}$, 
	$D_{n+1}$ is distributed symmetrically (orthogonal to $D_n$). Observing 
	that
	\begin{equation}
	\label{Mpinc}
		M^p_{n+1} = M^p_n + \frac{\Exp(T)}{2d} \, D_n  + D_{n+1} \left( 
		L_{n+1} - \frac{\Exp(T)}{2d}\right),
	\end{equation}
	it is then not difficult to verify that
	\begin{equation}
	\begin{split}
		\Exp\bigl( M^p_{n+1} \bigm| \fld{F}_n \bigr)
		&= \Exp\bigl( M^p_{n+1} \, \I(E_{n1}) \bigm| \fld{F}_n \bigr)
			+ \Exp\bigl( M^p_{n+1} \, \I(E_{n1}^c) \bigm| \fld{F}_n \bigr) \\
		&= M^p_n.
	\end{split}
	\end{equation}

	Next we use~\eqref{Mpinc} again, as well as $|D_n|^2 = 1$, to compute
	\begin{multline}
		|M^p_{n+1}|^2 = |M^p_n|^2 + \frac{\Exp(T)^2}{2d^2} + L_{n+1}^2 - 
		\frac{\Exp(T)}{d} \, L_{n+1}  \\
		\null + D_n \cdot M^p_n \, \frac{\Exp(T)}{d} + 2 \, D_{n+1} \cdot 
		M^p_n \, \left( L_{n+1} - \frac{\Exp(T)}{2d} \right) \\
		\null + \frac{\Exp(T)}{d} \, D_n \cdot D_{n+1} \left( L_{n+1} - 
		\frac{\Exp(T)}{2d} \right).
	\end{multline}
	In the same way as before, a straightforward calculation now leads to
	\begin{equation}
	\begin{split}
		\Exp\bigl( |M^p_{n+1}|^2 \bigm| \fld{F}_n \bigr)
		&= \Exp\bigl( |M^p_{n+1}|^2 \, \I(E_{n1}) \bigm| \fld{F}_n \bigr)
			+ \Exp\bigl( |M^p_{n+1}|^2 \, \I(E_{n1}^c)
				\bigm| \fld{F}_n \bigr) \\
		&= |M^p_n|^2 + \frac{d \, \Exp\bigl( T^2 \bigr) - \Exp(T)^2}{d},
	\end{split}
	\end{equation}
	confirming the proposition.
\end{proof}

\subsection{Connection with senile persistent random walk}

As alluded to in the introduction, the random walk~$W^p$ studied above can be 
seen as a time-change of another random walk $S^p$, called \emph{senile 
persistent random walk}, sampled at the random times
\begin{equation}
	\tau_n := \sum_{k=1}^n T_k \qquad \text{for all } n\in\N.
\end{equation}
The connection between the two walks is best established through the inverse 
of this time-change. That is, we introduce the random map $\tau^{-1} : 
\N\to\N$ by setting
\begin{equation}
	\tau^{-1}_n := \inf\{ m\in\N : \tau_m \geq n \} \qquad \text{for each } 
	n\in\N.
\end{equation}
Thus, for any point~$\omega$ of the sample space, $\tau^{-1}_n(\omega)$ is the 
time~$m$ such that $\tau_{m-1}(\omega)$ is less than~$n$ and $\tau_m(\omega)$ 
is at least~$n$. Note that for every $n\in\N$, $\tau^{-1}_n$ is a stopping 
time with respect to the filtration $\{ \fld{F}_n : n\in\N \}$, since (setting 
$\tau_0 := 0$)
\begin{equation}
	\bigl\{ \tau^{-1}_n \leq k \bigr\} = \bigcup_{m=1}^k \{\tau_{m-1} < n \leq 
	\tau_m\} = \{n \leq \tau_k\} \in \fld{F}_k.
\end{equation}
We also remark that $\tau^{-1}_n \leq n$ a.s., since $\tau_n$ is necessarily 
at least equal to~$n$.

The senile persistent random walk~$S^p$ can now be defined by
\begin{equation}
\label{Sp}
	S^p_n := W^p_{\tau^{-1}_n} + D_{\tau^{-1}_n} \, \bigl( n - 
	\tau_{\tau^{-1}_n} \bigr) \qquad \text{for all } n = 1,2,\dots,
\end{equation}
where $\tau_{\tau^{-1}_n} = \sum_{m=1}^{\tau^{-1}_n} T_m$. It may not be 
obvious from this formal definition how the walk~$S^p$ behaves, so let us 
discuss this in more detail. First observe that $S^p_{\tau_n} = W^p_n$, so 
that we can indeed interpret $W^p$ as the senile random walk~$S^p$ sampled at 
the times $\tau_n$. Next we note that by~\eqref{Sp}, in between times 
$\tau_{n-1}$ and $\tau_n$, the walk moves in a straight line from the position 
$W_{n-1}$ to $W_n$, taking steps of unit length. Therefore, we see that the 
random walk~$S^p$ is a walk which persists to move in a given direction for a 
random time distributed like~$T$, then chooses a new direction uniformly at 
random, moves in that direction for a random time distributed again like~$T$, 
and so on.

It is now instructive to interpret the role of the function~$f$ appearing in 
the distribution~\eqref{T} of the random times~$T_n$ from the behaviour of the 
walk~$S^p$. Looking at equation~\eqref{T}, we see that the walk~$S^p$, after 
having moved in the same direction for~$n$ steps, chooses to make the next 
step again in the same direction with a probability given by $\bigl( 1+f(n) 
\bigr) / \bigl( 2d+f(n) \bigr)$. Furthermore, all other choices of direction 
for the next step are equally likely. This description of the walk~$S^p$ 
corresponds to how the model was originally defined in~\cite{MMW96}.

Our next objective is to find the diffusion constant for the senile persistent 
random walk~$S^p$. It is given by the following result.

\begin{proposition}
\label{pro:dcSp}
	Suppose that $\Exp\bigl( T^2 \bigr) < \infty$. Then the diffusion constant 
	of the senile persistent random walk~$S^p$ is given by
	\[
		\lim_{n\to\infty} \frac{1}{n} \, \Exp\bigl( |S^p_n|^2 \bigr)
		= \frac{d \, \Exp\bigl( T^2 \bigr) - \Exp(T)^2}{d \, \Exp(T)}
		= \frac{C^p}{\Exp(T)}.
	\]
\end{proposition}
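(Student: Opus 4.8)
The plan is to use the identity $S^p_{\tau_n} = W^p_n$ together with the martingale $M^p$ from Proposition~\ref{pro:Mp} and the fact that $\tau^{-1}_n$ is a stopping time. The key idea is that sampling the senile walk at time $n$ is the same as sampling $W^p$ at time $\tau^{-1}_n$ and then adding a correction of length at most $T_{\tau^{-1}_n}$; since $\tau^{-1}_n \leq n$ a.s.\ and the step lengths have finite second moment, this correction is negligible in the $n\to\infty$ limit. So I would first write, from~\eqref{Sp}, $S^p_n = W^p_{\tau^{-1}_n} + D_{\tau^{-1}_n}(n-\tau_{\tau^{-1}_n})$, note that $0 \leq n - \tau_{\tau^{-1}_n} < T_{\tau^{-1}_n}$, and reduce the problem to computing $\lim_{n\to\infty} n^{-1}\Exp(|W^p_{\tau^{-1}_n}|^2)$, controlling the cross term and the remainder via Cauchy--Schwarz and a uniform bound on $\Exp(T_{\tau^{-1}_n}^2)$ (e.g.\ $T_{\tau^{-1}_n}^2 \leq \sum_{m=1}^n T_m^2$, whose expectation is $n\,\Exp(T^2)$).

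Next I would handle $\Exp(|W^p_{\tau^{-1}_n}|^2)$. The clean route is through the martingale $M^p$: replacing $W^p_{\tau^{-1}_n}$ by $M^p_{\tau^{-1}_n}$ costs only a bounded-length correction $\tfrac{\Exp(T)}{2d}D_{\tau^{-1}_n}$, which again contributes $o(n)$ after dividing by $n$. Then by optional stopping applied to the martingale $\{|M^p_n|^2 - nC^p\}$ at the bounded stopping time $\tau^{-1}_n$ (bounded by $n$, hence optional stopping applies without integrability worries), one gets $\Exp(|M^p_{\tau^{-1}_n}|^2) = C^p\,\Exp(\tau^{-1}_n)$. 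Therefore everything reduces to showing $\lim_{n\to\infty} n^{-1}\Exp(\tau^{-1}_n) = 1/\Exp(T)$. This is the renewal-theoretic heart of the argument: $\tau^{-1}_n$ is the first $m$ with $\tau_m \geq n$, i.e.\ (up to an off-by-one convention) the renewal counting function of the i.i.d.\ sequence $(T_k)$ evaluated at $n$, so by the elementary renewal theorem $\Exp(\tau^{-1}_n)/n \to 1/\Exp(T)$. Combining the pieces gives $\lim_{n\to\infty} n^{-1}\Exp(|S^p_n|^2) = C^p/\Exp(T)$, which by Proposition~\ref{pro:dcWp} equals $(d\,\Exp(T^2)-\Exp(T)^2)/(d\,\Exp(T))$.

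The main obstacle is making the error terms rigorous — in particular bounding the cross term $\Exp\bigl(D_{\tau^{-1}_n}\cdot M^p_{\tau^{-1}_n}\,(n-\tau_{\tau^{-1}_n})\bigr)$. Here I would use $|n-\tau_{\tau^{-1}_n}| \leq T_{\tau^{-1}_n}$ and Cauchy--Schwarz to bound it by $\sqrt{\Exp(|M^p_{\tau^{-1}_n}|^2)}\,\sqrt{\Exp(T_{\tau^{-1}_n}^2)}$, then use $\Exp(|M^p_{\tau^{-1}_n}|^2) = C^p\,\Exp(\tau^{-1}_n) \leq C^p n$ and $\Exp(T_{\tau^{-1}_n}^2) \leq n\,\Exp(T^2)$, so the cross term is $O(n)$ — unfortunately only $O(n)$, not $o(n)$, so a slightly sharper estimate is needed. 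A cleaner fix is to bound $T_{\tau^{-1}_n}^2$ by $\max_{m\le n} T_m^2$, whose expectation is $o(n)$ when $\Exp(T^2)<\infty$ (since $n^{-1}\max_{m\le n}T_m^2 \to 0$ a.s.\ and in $L^1$ by the classical lemma on maxima of i.i.d.\ variables with finite mean applied to $T_m^2$); this makes the cross term genuinely $o(n)$ and the remainder term $\Exp((n-\tau_{\tau^{-1}_n})^2)=o(n)$ as well. With that in hand the proof closes, the elementary renewal theorem being the only external input.
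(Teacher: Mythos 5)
Your proposal is correct and follows essentially the same route as the paper: decompose $S^p_n$ as $M^p_{\tau^{-1}_n}$ plus a small correction, apply optional sampling to the martingale $|M^p_n|^2 - nC^p$ at the bounded stopping time $\tau^{-1}_n$, obtain $n^{-1}\Exp(\tau^{-1}_n)\to 1/\Exp(T)$ (the paper derives this from the strong law plus bounded convergence rather than quoting the elementary renewal theorem), and control the remainder via $T_{\tau^{-1}_n}^2\leq\max_{m\leq n}T_m^2$, whose expectation is $o(n)$ by uniform integrability. The only nitpick is that optional sampling yields $\Exp\bigl(|M^p_{\tau^{-1}_n}|^2\bigr)=C^p\,\Exp\bigl(\tau^{-1}_n\bigr)+\Exp\bigl(|M^p_1|^2\bigr)-C^p$ rather than exactly $C^p\,\Exp\bigl(\tau^{-1}_n\bigr)$, an additive constant that is harmless after dividing by $n$.
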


\begin{proof}
	The key observation is that at time~$n$, $S^p$ is not far from $M^p$ at 
	the stopping time~$\tau^{-1}_n$. To be precise, from the definitions 
	\eqref{Mp} and~\eqref{Sp} we see that for all $n\in\N$,
	\begin{equation}
	\label{prf:dcSp1}
		S^p_n = M^p_{\tau^{-1}_n} + D_{\tau^{-1}_n} \left( \frac{\Exp(T)}{2d} 
		+ n - \tau_{\tau^{-1}_n} \right) =: M^p_{\tau^{-1}_n} + X^p_n,
	\end{equation}
	where we have introduced $X^p_n$ to denote the difference between $S^p_n$ 
	and $M^p_{\tau^{-1}_n}$. By the triangle inequality and H\"older's 
	inequality, we then have
	\begin{equation}
		\Bigl| \Exp\bigl( |S^p_n|^2 - | M^p_{\tau^{-1}_n} |^2 - |X^p_n|^2 
		\bigr) \Bigr| \leq 2 \, \sqrt{ \Exp\bigl( |\smash[b]{ 
		M^p_{\tau^{-1}_n} }|^2 \bigr) \Exp\bigl( |X^p_n|^2 \bigr) }.
	\end{equation}
	Therefore, to prove Proposition~\ref{pro:dcSp}, it suffices to show that, 
	on the one hand,
	\begin{equation}
	\label{prf:dcSp2}
		\lim_{n\to\infty} \frac{1}{n} \, \Exp\bigl( |M^p_{\tau^{-1}_n}|^2 
		\bigr) = \frac{d \, \Exp\bigl( T^2 \bigr) - \Exp(T)^2}{d \, \Exp(T)},
	\end{equation}
	and on the other hand,
	\begin{equation}
	\label{prf:dcSp3}
		\lim_{n\to\infty} \frac{1}{n} \, \Exp\bigl( T^2_{\tau^{-1}_n} \bigr) = 
		0.
	\end{equation}
	We will first show \eqref{prf:dcSp2} by appealing to 
	Proposition~\ref{pro:Mp} and the law of large numbers for the random 
	times~$\tau_n$, before we prove~\eqref{prf:dcSp3}.

	To ease the notation, we shall write~$C^p$ for the diffusion constant of 
	the walk~$W^p$ appearing in Proposition~\ref{pro:dcWp}. Now we observe 
	that (by standard martingale theory, see e.g.~\cite[Theorem~6.7.3]{AD00}) 
	Proposition~\ref{pro:Mp} and the fact that $\tau^{-1}_1, \tau^{-1}_2, 
	\dots$ is an increasing sequence of stopping times imply that the process
	\begin{equation}
		\bigl\{ |M^p_{\tau^{-1}_n}|^2 - {\tau^{-1}_n} \, C^p : n\in\N \bigr\}
	\end{equation}
	is a martingale with respect to the filtration $\bigl\{ 
	\fld{F}_{\tau^{-1}_n} : n\in\N \bigr\}$ defined by
	\begin{equation}
	\label{Ftauinvn}
		\fld{F}_{\tau^{-1}_n} := \bigl\{ A\in\fld{F} : A \cap \bigl\{ 
		\tau^{-1}_n \leq k \bigr\} \in \fld{F}_k \text{ for all } 
		k=1,2,\dots,n \bigr\}.
	\end{equation}
	Therefore,
	\begin{equation}
	\label{prf:dcSp4}
		\lim_{n\to\infty} \frac{1}{n} \, \Exp\bigl( |M^p_{\tau^{-1}_n}|^2 - 
		{\tau^{-1}_n} \, C^p \bigr) = \lim_{n\to\infty} \frac{1}{n} \, 
		\Exp\bigl( |M^p_1|^2 - C^p \bigr) = 0.
	\end{equation}
	But the strong law of large numbers dictates that $n^{-1} \tau_n \asto 
	\Exp(T)$, from which it follows that $n^{-1} \tau^{-1}_n \asto 
	\Exp(T)^{-1}$. Since $\tau^{-1}_n \leq n$ a.s., we therefore have
	\begin{equation}
		\lim_{n\to\infty} \frac{1}{n} \, \Exp\bigl( \tau^{-1}_n \bigr) = 
		\frac{1}{\Exp(T)}
	\end{equation}
	by bounded convergence. Together with~\eqref{prf:dcSp4}, this 
	implies~\eqref{prf:dcSp2}.

	It remains to show~\eqref{prf:dcSp3}. To this end, observe that 
	by~\eqref{prf:dcSp1}, $|X^p_n|$ is bounded by the sum of a constant and 
	the term $\tau_{\tau^{-1}_n} - n$, which takes values between $0$ and
	$T_{\tau^{-1}_n}$. Because $\tau^{-1}_n\leq n$, it therefore suffices to 
	show that $Y_n = \max_{k\leq n} T_k / \sqrt{n}$ converges to~$0$ in 
	probability and that the $Y_n^2$ are uniformly integrable. But by Boole's 
	and Markov's inequalities, for $\epsilon>0$,
	\begin{equation}
		\Prob\Bigl( \max_{k\leq n} T_k > \epsilon \sqrt{n} \bigr)
		\leq n \, \Prob(T > \epsilon \sqrt{n}) \leq \frac{1}{\epsilon^2} \,
		\Exp\bigl( T^2 \, \I(T^2 > \epsilon^2 n) \bigr) \to_n 0,
	\end{equation}
	because we are assuming that $\Exp\bigl(T^2\bigr) < \infty$. This shows 
	that $Y_n \to 0$ in probability. Uniform integrability of the $Y_n^2$ 
	follows from the fact that the $Y_n^2$ are bounded by $\tfrac{1}{n} 
	\sum_{k=1}^n T_k^2$, which are uniformly integrable because they have mean 
	$\Exp\bigl(T^2\bigr)$, are positive, and converge almost surely to 
	$\Exp\bigl(T^2\bigr)$. This implies~\eqref{prf:dcSp3}, and completes the 
	proof of Proposition~\ref{pro:dcSp}.
\end{proof}

\subsection{Weak convergence to Brownian motion}

We will now show weak convergence of~$S^p$ to Brownian motion, by applying a 
martingale functional limit theorem to the martingale $\{ M^p_{\tau^{-1}_n} : 
n\in\N \}$ studied above. We follow Billingsley~\cite[Section~18]{Bill99}.  
Let $D[0,\infty)$ be the metric space of right-continuous real functions 
on~$[0,\infty)$ with left-hand limits which has the Skorohod topology, as 
in~\cite[Section~16]{Bill99}. Generally, we will denote by~$W$ standard 
Brownian motion on any functional space under consideration, and we write 
$\weak_n$ to denote weak convergence with~$n$. Setting $S^p_0 := 0$ for the 
senile persistent random walk, the following holds:

\begin{theorem}
\label{thm:Spweak}
	Assume $\Exp\bigl(T^2\bigr)<\infty$. For every $t\geq0$ and $n\in\N$, 
	define
	\begin{equation}
		Z^n_t := \sqrt{\frac{d \, \Exp(T)}{n \, C^p}} \:
		S^p_{\lfloor nt \rfloor},
	\end{equation}
	where $C^p$ is the diffusion constant of~$W^p$ appearing in 
	Proposition~\ref{pro:dcWp}.	Then $Z^n \weak_n W$ in the sense 
	of~$D[0,\infty)^d$.
\end{theorem}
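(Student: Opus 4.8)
The plan is to reduce the weak convergence of $Z^n$ to a martingale functional limit theorem (Billingsley, Section~18) applied to the martingale $\{M^p_{\tau^{-1}_n} : n\in\N\}$ from Proposition~\ref{pro:Mp}, and then transfer the conclusion from the martingale to $S^p$ using the estimate~\eqref{prf:dcSp3} that already showed the difference $X^p_n$ is negligible on the diffusive scale. First I would record the decomposition $S^p_n = M^p_{\tau^{-1}_n} + X^p_n$ from~\eqref{prf:dcSp1}, and note that $\sup_{k\leq n}|X^p_k|/\sqrt{n} \to 0$ in probability: indeed $|X^p_k|$ is bounded by a constant plus $T_{\tau^{-1}_k}$, which is at most $\max_{j\leq k} T_j$, so $\sup_{k\le n}|X^p_k| \le \text{const} + \max_{j\le n}T_j$, and the Boole--Markov bound used in the proof of Proposition~\ref{pro:dcSp} gives $\max_{j\le n}T_j/\sqrt n \to 0$ in probability. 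Hence the piecewise-constant (or linearly interpolated) process built from $X^p_{\lfloor nt\rfloor}$ converges to $0$ uniformly on compacts in probability, and by Slutsky-type arguments for $D[0,\infty)^d$ it suffices to prove $\sqrt{d\,\Exp(T)/(nC^p)}\, M^p_{\tau^{-1}_{\lfloor nt\rfloor}} \weak_n W$.

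Next I would set up the martingale array. Write the coordinate-wise increments $\xi^{(n)}_{k} := M^p_{\tau^{-1}_k} - M^p_{\tau^{-1}_{k-1}}$ (with $\tau^{-1}_0 := 0$, $M^p_0 := 0$), which are martingale increments with respect to $\fld{F}_{\tau^{-1}_k}$. The functional CLT for martingales (as in~\cite[Theorem~18.2 and its multidimensional analogue, see also Section~19]{Bill99}) requires: (i) a conditional-variance (Lindeberg-type) condition, namely that $\frac{1}{n}\sum_{k\le \lfloor nt\rfloor} \Exp\bigl(\xi^{(n)}_k (\xi^{(n)}_k)^\top \bigm| \fld{F}_{\tau^{-1}_{k-1}}\bigr)$ converges in probability to $t\,\Sigma$ for a fixed covariance matrix $\Sigma$; and (ii) a conditional Lindeberg condition $\frac{1}{n}\sum_{k\le\lfloor nt\rfloor}\Exp\bigl(|\xi^{(n)}_k|^2 \I(|\xi^{(n)}_k| > \varepsilon\sqrt n) \bigm| \fld{F}_{\tau^{-1}_{k-1}}\bigr) \to 0$ in probability for every $\varepsilon>0$. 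For the covariance, I would exploit that $|M^p_n|^2 - nC^p$ is a martingale (Proposition~\ref{pro:Mp}), so the predictable quadratic variation of $M^p$ at step $m$ contributes exactly $C^p$ per unit of the original time, and by symmetry of the dynamics the off-diagonal terms and the anisotropy average out; combined with the optional-sampling/law-of-large-numbers fact that $\tau^{-1}_{\lfloor nt\rfloor}/n \to t/\Exp(T)$ a.s., the sum of conditional variances up to $\lfloor nt\rfloor$ should converge to $t\,\frac{C^p}{\Exp(T)}\cdot\frac{1}{d}\,I_d$ (the isotropy constant $1/d$ coming from spreading the scalar quadratic variation over $d$ coordinates), which matches the normalization $\sqrt{d\,\Exp(T)/(nC^p)}$ so that the limit is standard $d$-dimensional Brownian motion. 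For the Lindeberg condition, the single-step jump of $M^p_{\tau^{-1}_k}$ over one unit increase in $n$ is bounded by a constant plus $L_{\tau^{-1}_k} \le \max_{j\le \lfloor nt\rfloor} T_j$, and the same $\Exp(T^2)<\infty$ moment bound used earlier makes the truncated second moments vanish after dividing by $n$.

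The main obstacle I anticipate is verifying the conditional-variance condition (i) with the correct constant, because the increments $\xi^{(n)}_k$ are increments of $M^p$ evaluated along the random, $\fld{F}$-adapted times $\tau^{-1}_k$, so one must carefully combine two sources of randomness: the intrinsic step-to-step covariance structure of $M^p$ (which Proposition~\ref{pro:Mp} controls only through the scalar process $|M^p_n|^2 - nC^p$, not coordinate-by-coordinate), and the fluctuations of the time-change $\tau^{-1}$. My approach would be to first establish the one-dimensional result — that $\sqrt{d\,\Exp(T)/(nC^p)}$ times the projection of $M^p_{\tau^{-1}_{\lfloor nt\rfloor}}$ onto any fixed unit vector $u$ converges to a standard real Brownian motion — by applying the scalar martingale FCLT to $u\cdot M^p_{\tau^{-1}_k}$, whose predictable quadratic variation I can compute from a scalar version of the $|M^p|^2 - nC^p$ martingale together with the symmetry that makes $\Exp\bigl((u\cdot(M^p_{m+1}-M^p_m))^2 \mid \fld F_m\bigr)$ equal to $C^p/d$ independently of $u$ and $m$; then I would lift to the vector case via the Cramér--Wold device in $D[0,\infty)$, checking joint convergence of finite linear combinations and using tightness in $D[0,\infty)^d$ (which follows coordinatewise from the scalar FCLT). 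Finally I would reassemble: $Z^n = \sqrt{d\,\Exp(T)/(nC^p)}\,S^p_{\lfloor nt\rfloor}$ equals the converging martingale term plus the negligible $X^p$ term, so $Z^n \weak_n W$ in $D[0,\infty)^d$, as claimed.
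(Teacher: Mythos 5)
Your proposal follows essentially the same route as the paper: reduce to the time-changed martingale $M^p_{\tau^{-1}_n}$ via the negligibility of $X^p_n$ on scale $\sqrt{n}$, verify the Lindeberg and conditional-variance conditions of Billingsley's Theorem~18.2 by bounding the increments through the step lengths $T_l$ and by combining the martingale $|M^p_n|^2 - n\,C^p$ (spread over coordinates by symmetry) with the law of large numbers for $\tau^{-1}_n$, and then pass to $D[0,\infty)^d$ via coordinatewise tightness, vanishing conditional cross-covariances for $i\neq j$, and the Cram\'er--Wold device. This is the paper's proof in all essentials.
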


\begin{proof}
	First we recall that $S^p_n$ is close to $M^p_{\tau^{-1}_n}$, as expressed  
	by~\eqref{prf:dcSp1} in the proof of Proposition~\ref{pro:dcSp}. In fact, 
	the proof of Proposition~\ref{pro:dcSp} shows that $\sup_{0\leq s\leq t} 
	\bigl| X^p_{\lfloor ns \rfloor} \bigr| / \sqrt{n}$ converges to~$0$ in 
	probability for every fixed $t>0$. Therefore, it suffices to prove weak 
	convergence to Brownian motion for $M^p_{\tau^{-1}_n}$ instead 
	of~$S^p_n$.

	We recall that $\{ M^p_{\tau^{-1}_n} : n\in\N \}$ is a martingale with 
	respect to the filtration $\fld{F} := \{ \fld{F}_{\tau^{-1}_n} : n\in\N 
	\}$ defined by~\eqref{Ftauinvn}. Let us now write $M^i_{\tau^{-1}_n}$, $i 
	= 1,2,\dots,d$, for the one-dimensional marginals of $M^p_{\tau^{-1}_n}$.  
	For each $n\in\N$, define
	\begin{align}
		\xi^i_{n1}
		&:= \sqrt{\frac{d \, \Exp(T)}{n \, C^p}} \, M^i_{\tau^{-1}_1}
		&\quad i=1,2,\dots,d; \\
		\xi^i_{nk}
		&:= \sqrt{\frac{d \, \Exp(T)}{n \, C^p}} \,
			\Bigl( M^i_{\tau^{-1}_k} - M^i_{\tau^{-1}_{k-1}} \Bigr)
		&\quad i = 1,2,\dots,d; \quad k = 2,3,\dots
	\end{align}
	Then for each~$i$, the $\xi^i_{nk}$ form a triangular array of martingale 
	differences with respect to the filtration~$\fld{F} := \{ 
	\fld{F}_{\tau^{-1}_k} : k\in\N \}$.

	By~\eqref{Mpinc} we have for $k\geq2$
	\begin{equation}
	\begin{split}
		|\xi^i_{nk}|
		&= \sqrt{\frac{d \, \Exp(T)}{n \, C^p}} \,
			\I(\tau^{-1}_k \neq \tau^{-1}_{k-1}) \, \left|
			D^i_{\tau^{-1}_k} \, L_{\tau^{-1}_k} +
			\frac{\Exp(T)}{2d} \, \left(
				D^i_{\tau^{-1}_{k-1}} - D^i_{\tau^{-1}_k}
			\right) \right| \\
		&\leq \sqrt{\frac{d \, \Exp(T)}{n \, C^p}} \, \sum_{l\leq k}
			\I(\tau^{-1}_k=l, \tau^{-1}_{k-1}=l-1) \left(
			L_l + \frac{\Exp(T)}{d} \right) \\
		&= \sqrt{\frac{d \, \Exp(T)}{n \, C^p}} \, \sum_{l\leq k}
			\I(\tau_{l-1}=k-1) \left( T_l + \frac{\Exp(T)}{d} \right).
	\end{split}
	\end{equation}
	Setting $\tau_0 := 0$, it is clear that this bound also holds for $k=1$.

	Now fix $\epsilon>0$ and set $\delta := \tfrac{1}{2} \epsilon \sqrt{C^p / 
	d \, \Exp(T)}$. Then from the bound on $|\xi^i_{nk}|$, it follows that for 
	$n$ sufficiently large we have that
	\begin{multline}
		\sum_{k\leq nt} \Exp\bigl( (\xi^i_{nk})^2 \,
			\I(|\xi^i_{nk}| \geq \epsilon) \bigr) \\
		\leq \frac{2d \, \Exp(T)}{n \, C^p} \,
			\Exp\biggl[ \,
				\sum_{k\leq nt} \sum_{l\leq k}
				\I(\tau_{l-1}=k-1) \, T_l^2 \, \I(T_l \geq \delta\sqrt{n})
			\biggr].
	\end{multline}
	Interchanging the order of summation and using that $\sum_{k=l}^{\lfloor 
	nt \rfloor} \I(\tau_{l-1}=k-1) \leq 1$, we arrive at
	\begin{equation}
		\sum_{k\leq nt} \Exp\bigl( (\xi^i_{nk})^2 \,
			\I(|\xi^i_{nk}| \geq \epsilon) \bigr)
		\leq \frac{2d \, \Exp(T)}{n \, C^p} \, \sum_{l\leq nt}
			\Exp\left[ T_l^2 \, \I(T_l \geq \delta\sqrt{n}) \right].
	\end{equation}
	Since the $T_l$ are i.i.d.\ and $\Exp(T^2) < \infty$, we conclude that for 
	every $t\geq0$,
	\begin{equation}
	\label{BillCond2}
		\sum_{k\leq nt} \Exp\bigl( (\xi^i_{nk})^2 \,
			\I(|\xi^i_{nk}| \geq \epsilon) \bigr) \to_n 0.
	\end{equation}
	
	Now put
	\begin{equation}
	\label{sigma}
		(\sigma^i_{nk})^2 :=
			\Exp\bigl( (\xi^i_{nk})^2 \bigm| \fld{F}_{\tau^{-1}_{k-1}} \bigr),
	\end{equation}
	where we define $\fld{F}_{\tau^{-1}_0}$ to be the trivial $\sigma$-field 
	$\{ \varnothing,\Omega \}$. By Proposition~\ref{pro:Mp} and the symmetry 
	of our random walks, for every $i = 1,2,\dots,d$,
	\begin{equation}
		\Bigl\{ \Bigl( \bigl( M^i_{\tau^{-1}_n} \bigr)^2 - \tau^{-1}_n C^p / 
		d, \fld{F}_{\tau^{-1}_n} \Bigr): n\in\N \Bigr\}
	\end{equation}
	is a martingale. Therefore, for $k\geq2$,
	\begin{equation}
	\begin{split}
		(\sigma^i_{nk})^2
		&= \frac{d \, \Exp(T)}{n \, C^p} \, \Exp\Bigl( (M^i_{\tau^{-1}_k})^2 + 
		(M^i_{\tau^{-1}_{k-1}})^2 - 2 M^i_{\tau^{-1}_k} M^i_{\tau^{-1}_{k-1}} 
		\Bigm| \fld{F}_{\tau^{-1}_{k-1}} \Bigr) \\
		&= \frac{\Exp(T)}{n} \, \Bigl( \Exp\bigl( \tau^{-1}_k \bigm| 
		\fld{F}_{\tau^{-1}_{k-1}} \bigr) - \tau^{-1}_{k-1} \Bigr).
	\end{split}
	\end{equation}
	Next we observe that for all $l,m\in\N$ (considering $l>m$ and $l\leq m$ 
	in turn),
	\begin{equation}
		\{ \tau^{-1}_k \leq l \} \cap \{ \tau^{-1}_{k-1} \leq m \}
		= \{ \tau_l \geq k \} \cap \{ \tau_m \geq k-1 \} \in \fld{F}_m.
	\end{equation}
	It follows by~\eqref{Ftauinvn} that $\{ \tau^{-1}_k \leq l \} \in 
	\fld{F}_{\tau^{-1}_{k-1}}$ for all~$l$, and hence, that the random 
	variable $\tau^{-1}_k$ is in fact $\fld{F}_{\tau^{-1}_{k-1}}$-measurable.  
	Therefore, for $k\geq2$,
	\begin{equation}
		(\sigma^i_{nk})^2 = \frac{\Exp(T)}{n} \, \bigl( \tau^{-1}_k - 
		\tau^{-1}_{k-1} \bigr).
	\end{equation}
	By the strong law of large numbers, it immediately follows that for every 
	$t\geq0$,
	\begin{equation}
	\label{BillCond1}
		\sum_{k\leq nt} (\sigma^i_{nk})^2 = \sum_{k\leq nt} \Exp\bigl( 
		(\xi^i_{nk})^2 \bigm| \fld{F}_{\tau^{-1}_{k-1}} \bigr) \asto_n t.
	\end{equation}

	Now write (taking $M^i_0 := 0$)
	\begin{equation}
		Y^{ni}_t := \sum_{k\leq nt} \xi^i_{nk}
		= \sqrt{\frac{d \, \Exp(T)}{n \, C^p}} \:
		M^i_{\tau^{-1}_{\lfloor nt \rfloor}}.
	\end{equation}
	Then Theorem~18.2 in~\cite{Bill99} states that because \eqref{BillCond1} 
	and~\eqref{BillCond2} both hold, $Y^{ni} \weak_n W$ in the sense of 
	$D[0,\infty)$. In other words, we have shown that the one-dimensional 
	marginals~$Y^{ni}$ converge weakly to Brownian motion. We now want to 
	extend this to weak convergence of $Y^n = (Y^{n1}, \dots, Y^{nd})$. The 
	proof of Theorem~18.2 in~\cite{Bill99} shows that for each~$i$ the laws of 
	the one-dimensional marginals~$Y^{ni}$ form a tight family. But since the 
	product of compact sets in $D[0,\infty)$ is a compact set, this implies 
	tightness of the family of laws of the $Y^n$. It remains to show that all 
	finite-dimensional distributions of~$Y^n$ converge to those of 
	$d$-dimensional Brownian motion.

	To show this, we need the additional result that for $i\neq j$ and all 
	$n\in\N$,
	\begin{equation}
		\Exp\bigl( (M^i_{n+1} - M^i_n)(M^j_{n+1} - M^j_n) \bigm| \fld{F}_n 
		\bigr) = 0.
	\end{equation}
	This can be seen by using~\eqref{Mpinc} and noting that on the event 
	$\{D_n \cdot D_{n+1} \neq 0\}$, $D_{n+1} = -D_n$ whereas on the event 
	$\{D_n \cdot D_{n+1} = 0\}$, $D^i_{n+1} D^j_n$ takes on each of the values 
	$\pm1$ with equal probabilities. It follows that for $i\neq j$ and 
	fixed~$n$, the $\xi^i_{nk} \xi^j_{nk}$ are martingale differences with 
	respect to the filtration~$\fld{F}$.

	Now fix $s,t\geq0$ and let $(a_1,\ldots,a_d)$ and $(b_1,\ldots,b_d)$ be 
	arbitrary vectors of real numbers. Define $\eta_{nk}$ as $\sum_i (a_i+b_i) 
	\xi^i_{nk}$ for $k \leq \lfloor ns \rfloor$ and as $\sum_i b_i \xi^i_{nk}$ 
	for $\lfloor ns \rfloor < k \leq \lfloor n(s+t) \rfloor$.  Then, 
	by~\eqref{BillCond1} and because the $\xi^i_{nk} \xi^j_{nk}$ are 
	martingale differences when $i\neq j$,
	\begin{equation}
		\sum_{k\leq n(s+t)} \Exp\Bigl( \eta_{nk}^2 \Bigm| 
		\fld{F}_{\tau^{-1}_{k-1}} \Bigr)
		\asto_n \sum_i (a_i+b_i)^2 \, s + \sum_i b_i^2 \, t.
	\end{equation}
	Therefore, by \cite[Theorem~18.1]{Bill99}, $\sum_i \bigl( a_iY^{ni}_s + 
	b_iY^{ni}_{s+t} \bigr) \weak_n \sum_i \bigl( a_iW^i_s + b_iW^i_{s+t} 
	\bigr)$, where the $W^i$ are the one-dimensional marginals of 
	$d$-dimensional Brownian motion. But since the $a_i$ and $b_i$ were 
	arbitrary, by the Cram\'er-Wold device $(Y^{n1}_s, \ldots, Y^{nd}_s, 
	Y^{n1}_{s+t}, \ldots, Y^{nd}_{s+t}) \weak_n (W^1_s, \ldots, W^d_s, 
	W^1_{s+t}, \ldots, W^d_{s+t})$. It is easy to see that this argument can 
	be generalized to show that all finite-dimensional distributions of~$Y^n$ 
	converge to those of $d$-dimensional Brownian motion. This completes the 
	proof.
\end{proof}

\section{The reinforced case}
\label{sec:reinforced}

We now turn our attention to the reinforced case, where we set
\begin{equation}
\label{Lr}
	L_m = \I(T_m \text{ is odd}) \qquad \text{for all } m\in\N.
\end{equation}
Thus, the lengths of the steps of the random walk are i.i.d.\ variables taking 
values in $\{0,1\}$. However, the directions of different steps are not 
independent. Namely, the step following a step of length~$0$ may not have the 
same direction as the previous step, and the step following a step of 
length~$1$ may not be in the opposite direction of the previous step. All 
other choices of direction are equally likely. Formally, the directions~$D_m$ 
satisfy
\begin{equation}
\label{D1r}
	\Prob(D_1 = e_i) = \frac{1}{2d} \qquad \text{for each } i=1,2,\ldots,2d,
\end{equation}
and for all $m\in\N$ and each $i=1,2,\ldots,2d$,
\begin{multline}
\label{Dr}
	\Prob(D_{m+1} = e_i \mid D_m,L_m) \\
	\null = \frac{1}{2d-1} \, \I(D_m\neq e_i, L_m=0)
	+ \frac{1}{2d-1} \, \I(D_m\neq -e_i, L_m=1).
\end{multline}
Equations \eqref{Lr}--\eqref{Dr} completely specify the law of the random walk 
defined by~\eqref{rwW}. For the remainder of this section, we will denote this 
walk by $W^r = (W^r_1,W^r_2,\dots)$, where the superscript~$r$ is used to 
identify the \emph{reinforced} case.

From~\eqref{Lr}, it may not come as a surprise that the quantity
\begin{equation}
\label{p}
	p := \Prob(T \text{ is odd})
\end{equation}
plays an important role in the analysis of the reinforced case. In fact, if 
$d=1$ we see from~\eqref{Dr} that the walk has a trivial behaviour if $p=1$, 
since then it keeps moving in the same direction. Let us therefore take the 
opportunity to exclude this special case from the analysis for the remainder 
of this section, so that we don't have to repeat the condition that $p<1$ if 
$d=1$ all the time. Note, however, that the case $p=1$ is perfectly fine and 
nontrivial in higher dimensions. Also, when we consider the time-changed 
walk~$W^r$ there will be no problem in allowing $\Prob(T = \infty) > 0$, where 
we may assume either that ``$T_n$ is odd'' is false, or that ``$T_n$ is odd'' 
is true if $T_n = \infty$, whichever one prefers.

\subsection{Direct calculation of the diffusion constant}

We will now compute the diffusion constant for the random walk~$W^r$ defined 
above. In terms of the parameter $p = \Prob(T \text{ is odd})$, the diffusion 
constant is identified by the following proposition.

\begin{proposition}
\label{pro:dcWr}
	The diffusion constant of the random walk~$W^r$ is given by
	\[
		C^r := \lim_{n\to\infty} \frac{1}{n} \, \Exp\bigl( |W^r_n|^2 \bigr) = 
		\frac{d\,p}{d-p}.
	\]
\end{proposition}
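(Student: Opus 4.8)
The plan is to imitate the direct computation behind Proposition~\ref{pro:dcWp}, the new difficulty being that in the reinforced case the directions $D_m$ and the lengths $L_m$ are no longer independent. Expanding the square in $W^r_n = \sum_{m=1}^n D_m L_m$, using $|D_m|^2 = 1$ and $L_m^2 = L_m$ (so that $\Exp(L_m^2) = \Exp(L_m) = p$), one gets
\[
	\Exp\bigl( |W^r_n|^2 \bigr) = n\,p + 2 \sum_{k=1}^{n-1} \sum_{m=1}^{n-k} \Exp\bigl( D_m \cdot D_{m+k}\, L_m L_{m+k} \bigr),
\]
so everything reduces to evaluating the correlation $\Exp\bigl( D_m \cdot D_{m+k}\, L_m L_{m+k} \bigr)$ for $k \geq 1$.

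The first observation is that the last length $L_{m+k}$ is independent of $D_m$, $D_{m+k}$ and $L_m$: the direction $D_{m+k}$ is produced from $D_1$, the lengths $L_1,\dots,L_{m+k-1}$ and the transition rule~\eqref{Dr}, none of which involves $T_{m+k}$, while the $T_l$ are i.i.d. Hence $\Exp\bigl( D_m \cdot D_{m+k}\, L_m L_{m+k} \bigr) = p\,\Exp\bigl( D_m \cdot D_{m+k}\, L_m \bigr)$, and it suffices to study $h_k := \Exp\bigl( D_m \cdot D_{m+k}\, L_m \bigr)$ (which will turn out not to depend on $m$). To set up a recursion, compute $\Exp(D_{m+k} \mid \fld{F}_{m+k-1})$ from~\eqref{Dr}: since the $2d$ unit vectors sum to $0$, on the event $\{L_{m+k-1} = 0\}$ this conditional expectation equals $-\tfrac{1}{2d-1}\,D_{m+k-1}$, and on $\{L_{m+k-1} = 1\}$ it equals $+\tfrac{1}{2d-1}\,D_{m+k-1}$, the two cases combining into $\tfrac{1}{2d-1}\,(2L_{m+k-1}-1)\,D_{m+k-1}$. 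Dotting with $D_m$, multiplying by $L_m$ and taking expectations gives $h_k = \tfrac{1}{2d-1}\,\Exp\bigl( (2L_{m+k-1}-1)\, D_m \cdot D_{m+k-1}\, L_m \bigr)$.

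For $k \geq 2$ the factor $L_{m+k-1}$ is again independent of $D_m \cdot D_{m+k-1}\, L_m$ (same reasoning), so it factors out and $h_k = \tfrac{2p-1}{2d-1}\, h_{k-1}$. For the base case $k=1$, using $D_m \cdot D_m = 1$ and $(2L_m - 1)L_m = L_m$ (because $L_m^2 = L_m$) gives $h_1 = \tfrac{1}{2d-1}\,\Exp(L_m) = \tfrac{p}{2d-1}$. Solving the recursion yields $h_k = \tfrac{p}{2d-1}\bigl( \tfrac{2p-1}{2d-1} \bigr)^{k-1}$, hence $\Exp\bigl( D_m \cdot D_{m+k}\, L_m L_{m+k} \bigr) = \tfrac{p^2}{2d-1}\bigl( \tfrac{2p-1}{2d-1} \bigr)^{k-1}$. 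Substituting into the display above, dividing by $n$ and letting $n \to \infty$ — using $\tfrac1n\sum_{k=1}^{n-1}(n-k)\,r^{k-1} \to \tfrac{1}{1-r}$ with $r := \tfrac{2p-1}{2d-1}$, noting $|r| < 1$ since the case $d=1$, $p=1$ has been excluded and $p=0$ makes the whole correlation sum vanish — gives
\[
	C^r = p + \frac{2p^2}{2d-1}\cdot\frac{1}{1 - \frac{2p-1}{2d-1}} = p + \frac{p^2}{d-p} = \frac{d\,p}{d-p}.
\]
The one step requiring genuine care is the independence claim and its reuse in the recursion — that is, keeping straight exactly which past information each $L_l$ and each $D_l$ depends on in the presence of the direction/length coupling; everything else is a short geometric computation parallel to Proposition~\ref{pro:dcWp}. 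I expect that to be the main obstacle, after which the recursion and the limit are routine.
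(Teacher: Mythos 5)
Your proof is correct and follows essentially the same route as the paper's: the same expansion of $\Exp\bigl(|W^r_n|^2\bigr)$, the same factorization of $L_{m+k}$ by independence, and the same recursion $h_k = \tfrac{2p-1}{2d-1}\,h_{k-1}$ with base case $h_1 = \tfrac{p}{2d-1}$. The only cosmetic differences are that you obtain the recursion coefficient via $\Exp\bigl(D_{m+k}\bigm|\fld{F}_{m+k-1}\bigr) = \tfrac{2L_{m+k-1}-1}{2d-1}\,D_{m+k-1}$ rather than by splitting on the event $\{D_{m+k-1}\cdot D_{m+k}=0\}$ and its complement, and that you pass to the limit asymptotically instead of evaluating the finite geometric sum in closed form.
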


\begin{proof}
	We start from the observation that
	\begin{equation}
	\label{dcWr}
		\Exp\bigl( |W_n^r|^2 \bigr) = n \, p + 2 \, \sum_{k=1}^{n-1} 
		\sum_{m=1}^{n-k} \Exp( D_m\cdot D_{m+k} \, L_m L_{m+k} ),
	\end{equation}
	where, since $D_m \cdot D_{m+k} \, L_m$ is independent of $L_{m+k}$,
	\begin{equation}
	\label{dcWr_a}
		\Exp( D_m\cdot D_{m+k} \, L_m L_{m+k} )
		= p \, \Exp( D_m\cdot D_{m+k} \, L_m ).
	\end{equation}
	Now note that on the event $E_{mk} := \{D_{m+k-1} \cdot D_{m+k} = 0\}$, 
	$D_m \cdot D_{m+k}$ takes on the values $\pm1$ with equal probabilities 
	by~\eqref{Dr}. On the other hand, on the complementary event $E_{mk}^c$ we 
	have that $D_{m+k} = D_{m+k-1} \, (2L_{m+k-1}-1)$. Therefore, again using 
	independence,
	\begin{equation}
	\begin{split}
		\Exp( D_m\cdot D_{m+k} \, L_m )
		&= \Exp\bigl( D_m \cdot D_{m+k-1} \, L_m (2L_{m+k-1}-1) \,
				\I(E_{mk}^c) \bigr) \\
		&= \frac{2p-1}{2d-1} \, \Exp( D_m\cdot D_{m+k-1} \, L_m ).
	\end{split}
	\end{equation}
	Iterating this recursion and using~\eqref{dcWr_a}, we obtain
	\begin{equation}
	\begin{split}
		\Exp(D_m\cdot D_{m+k} \, L_m L_{m+k})
		&= p \, \left( \frac{2p-1}{2d-1} \right)^{k-1} \,
			\Exp(D_m \cdot D_{m+1} \, L_m) \\
		&= \frac{p^2}{2d-1} \, \left( \frac{2p-1}{2d-1} \right)^{k-1}.
	\end{split}
	\end{equation}
	Substituting this result into~\eqref{dcWr} yields
	\begin{equation}
	\begin{split}
		\Exp\bigl( |W_n^r|^2 \bigr)
			&= n \, p + \frac{2p^2}{2d-1} \, \sum_{k=1}^{n-1} \, (n-k) \, 
			\left( \frac{2p-1}{2d-1} \right)^{k-1} \\
			&= n \, \frac{d \, p}{d-p} + \frac{p^2 \, (2d-1)}{2 \, (d-p)^2} \, 
			\left[ \left( \frac{2p-1}{2d-1} \right)^n - 1 \right].
	\end{split}
	\end{equation}
	The value of the diffusion constant for the random walk~$W^r$ follows.
\end{proof}

\subsection{Martingales for the reinforced random walk}

The purpose of this subsection is to identify martingales associated with the 
random walk~$W^r$ introduced above. Our main observation is that if we add a 
correction of constant length (but random direction) to the positions $W^r_n$, 
then we obtain a martingale. To be precise, define
\begin{equation}
\label{Mr}
	M^r_n := W^r_n + \frac{p}{2 \, (d-p)} \, D_n \, (2L_n-1) \qquad \text{for 
	all } n\in\N,
\end{equation}
and let $\{ \fld{F}_n : n\in\N\}$ be the filtration defined by
\begin{equation}
	\fld{F}_n := \sigma(D_1,T_1,D_2,T_2,\dots,D_n,T_n) \qquad \text{for all } 
	n\in\N.
\end{equation}
The following proposition identifies two martingales associated with~$W^r$.

\begin{proposition}
\label{pro:Mr}
	Let $C^r$ be the diffusion constant appearing in 
	Proposition~\ref{pro:dcWr}. Then $\{ (M^r_n, \fld{F}_n) : n\in\N \}$ and 
	$\bigl\{ \bigl( |M^r_n|^2 - n \, C^r, \fld{F}_n \bigr) :  n\in\N	
	\bigr\}$ are martingales.
\end{proposition}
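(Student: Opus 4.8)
The plan is to mirror the structure of the proof of Proposition~\ref{pro:Mp} from the persistent case, adapting the computations to the reinforced dynamics~\eqref{Lr}--\eqref{Dr}. The three structural ingredients carry over in modified form: (i) the event $E_{n1} := \{D_n \cdot D_{n+1} = 0\}$ and its complement are independent of $\fld{F}_n$; (ii) on $E_{n1}^c$ we now have $D_{n+1} = D_n \,(2L_n - 1)$ rather than $D_{n+1} = -D_n$; and (iii) on $E_{n1}$, the direction $D_{n+1}$ is distributed symmetrically among the $2d-2$ directions orthogonal to $D_n$, so that $\Exp(D_{n+1} \mid \fld{F}_n, E_{n1}) = 0$. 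Note that $E_{n1}$ and $E_{n1}^c$ are not independent of $\fld{F}_n$ in isolation — their probabilities depend on $L_n$ — but conditionally on $\fld{F}_n$ the law of $\I(E_{n1})$ is determined (it equals $(2d-2)/(2d-1)$ regardless of $L_n$, since exactly one forbidden direction is excluded in either case), which is all that is needed.

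For the first martingale, I would write out the increment
\begin{equation}
\label{Mrinc}
	M^r_{n+1} = M^r_n - \frac{p}{2(d-p)} \, D_n \,(2L_n-1)
		+ D_{n+1}\Bigl( L_{n+1} + \frac{p}{2(d-p)}\,(2L_{n+1}-1) \Bigr),
\end{equation}
then split the conditional expectation over $E_{n1}$ and $E_{n1}^c$. On $E_{n1}$ the term involving $D_{n+1}$ vanishes by symmetry (ingredient~(iii)), using independence of $L_{n+1}$ from everything in $\fld{F}_n$ together with $\I(E_{n1})$; on $E_{n1}^c$ we substitute $D_{n+1} = D_n\,(2L_n-1)$ and use $(2L_n-1)^2 = 1$. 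The contribution from $E_{n1}^c$, which has conditional probability $1/(2d-1)$, must exactly cancel the correction term $-\frac{p}{2(d-p)}D_n(2L_n-1)$; this fixes the constant $p/(2(d-p))$ and is a short algebraic check involving $\Exp(2L_{n+1}-1) = 2p-1$ and $\Exp(L_{n+1}) = p$.

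For the second martingale I would expand $|M^r_{n+1}|^2$ using~\eqref{Mrinc} and $|D_n|^2 = |D_{n+1}|^2 = 1$, collecting the cross terms $D_n \cdot M^r_n$, $D_{n+1}\cdot M^r_n$, and $D_n \cdot D_{n+1}$. Taking $\Exp(\,\cdot \mid \fld{F}_n)$ and splitting over $E_{n1}$, $E_{n1}^c$ as before: the $D_{n+1}\cdot M^r_n$ term drops on $E_{n1}$ by symmetry, while on $E_{n1}^c$ we again use $D_{n+1} = D_n(2L_n-1)$. The terms linear in $M^r_n$ should organize so that their conditional expectation vanishes (this is essentially forced by the first martingale property), leaving a constant that must equal $C^r = dp/(d-p)$. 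I expect the main obstacle to be purely bookkeeping: getting every $\pm$ sign right in the expansion of $|M^r_{n+1}|^2$ and in the conditional-expectation split, since several terms carry factors of $(2L_n-1)$ and $(2L_{n+1}-1)$ whose products must be simplified via $(2L-1)^2 = 1$ and $\Exp(2L-1) = 2p-1$. A useful sanity check at the end is that the constant obtained agrees with Proposition~\ref{pro:dcWr}, which it must, since $\Exp(|M^r_n|^2) - \Exp(|W^r_n|^2)$ is bounded and hence contributes nothing to the diffusion constant.
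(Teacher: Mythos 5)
Your proposal is correct and follows essentially the same route as the paper: the same increment identity for $M^r_{n+1}$, the same split of the conditional expectation over $E_{n1}$ and $E_{n1}^c$ using that $D_{n+1}=D_n(2L_n-1)$ on $E_{n1}^c$ and is symmetric orthogonal to $D_n$ on $E_{n1}$, and the same expansion of $|M^r_{n+1}|^2$ for the second martingale. Your hedge that $E_{n1}$ is ``not independent of $\fld{F}_n$ in isolation'' is unnecessary --- as you yourself observe, its conditional probability given $\fld{F}_n$ is the constant $(2d-2)/(2d-1)$, which is precisely independence, and this is how the paper states it.
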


\begin{proof}
	The key observation is that even though the direction $D_{n+1}$ itself 
	depends on $D_n$ and $L_n$, the events $E_{n1} := \{D_n \cdot D_{n+1} = 
	0\}$ and its complement $E_{n1}^c$ are \emph{independent} of the events in 
	$\fld{F}_n$, and have the probabilities $(2d-2)/(2d-1)$ and $1/(2d-1)$, 
	respectively. Moreover, on the event $E_{n1}$, $D_{n+1}$ is distributed 
	symmetrically (orthogonal to $D_n$), and on the event $E_{n1}^c$, we have 
	$D_{n+1} = D_n \, (2L_n - 1)$. Now observe that for all $n\in\N$,
	\begin{multline}
	\label{Mrinc}
		M^r_{n+1} = M^r_n - \frac{p}{2 \, (d-p)} \, D_n \, (2L_n-1) \\
		+ \frac{p}{2 \, (d-p)} \, D_{n+1} \, (2L_{n+1}-1) + D_{n+1} \, 
		L_{n+1}.
	\end{multline}
	A simple computation then yields
	\begin{equation}
	\begin{split}
		\Exp\bigl( M^r_{n+1} \bigm| \fld{F}_n \bigr)
			&= \Exp\bigl( M^r_{n+1} \, \I(E_{n1}) \bigm| \fld{F}_n \bigr)
				+ \Exp\bigl( M^r_{n+1} \, \I(E_{n1}^c)
					\bigm| \fld{F}_n \bigr) \\
			&= M^r_n.
	\end{split}
	\end{equation}

	Likewise, for all $n\in\N$ we can write
	\begin{multline}
		|M^r_{n+1}|^2 = |M^r_n|^2 + \frac{p^2}{2 \, (d-p)^2} + \frac{d}{d-p} 
		\, \I(L_{n+1} = 1) \\
		\null - D_n \cdot M^r_n \, \frac{p}{d-p} \, (2L_n-1) + D_{n+1} \cdot 
		M^r_n \, \left[ \frac{p}{d-p} \, (2L_{n+1}-1) + 2L_{n+1} \right] \\
		\null - \frac{p}{2 \, (d-p)} \, D_n \cdot D_{n+1} \, (2L_n-1) \, 
		\left[ \frac{p}{d-p} \, (2L_{n+1}-1) + 2L_{n+1} \right].
	\end{multline}
	A straightforward computation gives
	\begin{equation}
	\begin{split}
		\Exp\bigl( |M^r_{n+1}|^2 \bigm| \fld{F}_n \bigr)
			&= \Exp\bigl( |M^r_{n+1}|^2 \, \I(E_{n1}) \bigm| \fld{F}_n \bigr)
				+ \Exp\bigl( |M^r_{n+1}|^2 \, \I(E_{n1}^c)
					\bigm| \fld{F}_n \bigr) \\
			&= |M^r_n|^2 + \frac{d \, p}{d-p}.
	\end{split}
	\end{equation}
	This confirms that the two processes of the proposition are martingales 
	with respect to the filtration $\{\fld{F}_n : n\in\N\}$.
\end{proof}

\subsection{Connection with senile reinforced random walk}

Like in the persistent case, the walk $W^r$ can be interpreted as a 
\emph{senile reinforced random walk} sampled at the random times
\begin{equation}
	\tau_n := \sum_{k=1}^n T_k \qquad \text{for all } n\in\N.
\end{equation}
As before, we concentrate on the inverse time-change defined by
\begin{equation}
	\tau^{-1}_n := \inf\{ m\in\N : \tau_m \geq n \} \qquad \text{for each } 
	n\in\N.
\end{equation}
We recall that the random times $\tau^{-1}_n$ are stopping times with  respect 
to the filtration $\{\fld{F}_n : n\in\N\}$, and that $\tau^{-1}_n \leq n$ 
almost surely.

We may define the senile reinforced random walk~$S^r$ on $\Z^d$ by
\begin{equation}
\label{Sr}
	S^r_n := W^r_{\tau^{-1}_n} - D_{\tau^{-1}_n} \, \I\bigl( 
	\tau_{\tau^{-1}_n} - n \text{ is odd} \bigr) \qquad \text{for all } 
	n\in\N,
\end{equation}
where $\tau_{\tau^{-1}_n} = \sum_{m=1}^{\tau^{-1}_n} T_m$. Observe that indeed 
$S^r_{\tau_n} = W^r_n$ for all $n\in\N$, so that we may interpret $W^r$ as the 
senile random walk~$S^r$ sampled at the times~$\tau_n$. Furthermore, by the 
definition~\eqref{Sr}, in between times $\tau^{-1}_{n-1}$ and $\tau^{-1}_n$, 
the walk $S^r$ jumps back and forth between the positions $W^r_{n-1}$ and 
$W^r_n$. Thus, the senile reinforced random walk~$S^r$ is a walk that 
traverses an edge back and forth for a random time distributed like~$T$, then 
selects a new edge uniformly at random and traverses that edge for a random 
time again distributed like~$T$, and so on.

As in the persistent case, this description gives us an interpretation of the 
reinforcement function~$f$ defining the distribution of the random times~$T_n$ 
in~\eqref{T}. Namely, the walk $S^r$ has the property that after it has been 
traversing the same edge back and forth for the last~$n$ steps, it will choose 
to traverse that edge again in the next step with probability $(1+f(n)) / 
(2d+f(n))$. Furthermore, all other choices for the next edge are equally 
likely. This corresponds to the original definition of the model 
in~\cite{HS07}.

At this stage, we should note that the walk gets stuck on an edge in case $T_n 
= \infty$ for some $n\in\N$. For the remainder of this section, we will rule 
out this possibility by assuming $\Prob(T = \infty) = 0$. However, we note 
that the following proposition and proof hold perfectly well if $\Exp(T) = 
\infty$, when we take division by $\infty$ to yield~$0$.

\begin{proposition}
\label{pro:dcSr}
	For the senile reinforced random walk~$S^r$, the diffusion constant is 
	given by
	\[
		\lim_{n\to\infty} \frac{1}{n} \, \Exp\bigl( |S^r_n|^2 \bigr)
		= \frac{1}{\Exp(T)} \, \frac{d \, p}{d-p} = \frac{C^r}{\Exp(T)}.
	\]
\end{proposition}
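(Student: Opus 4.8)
The plan is to mirror the proof of Proposition~\ref{pro:dcSp} from the persistent case, replacing the martingale $M^p$ by $M^r$ and the representation~\eqref{prf:dcSp1} by the analogous one coming from~\eqref{Mr} and~\eqref{Sr}. First I would write, for all $n\in\N$,
\begin{equation}
	S^r_n = M^r_{\tau^{-1}_n} - D_{\tau^{-1}_n} \I\bigl( \tau_{\tau^{-1}_n} - n \text{ is odd} \bigr) - \frac{p}{2\,(d-p)} \, D_{\tau^{-1}_n} \, (2L_{\tau^{-1}_n}-1) =: M^r_{\tau^{-1}_n} + X^r_n,
\end{equation}
where $X^r_n$ has length bounded by the constant $1 + p/(2\,(d-p))$. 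By the triangle and H\"older inequalities, as in the persistent case,
\begin{equation}
	\Bigl| \Exp\bigl( |S^r_n|^2 - |M^r_{\tau^{-1}_n}|^2 - |X^r_n|^2 \bigr) \Bigr| \leq 2 \, \sqrt{ \Exp\bigl( |\smash[b]{M^r_{\tau^{-1}_n}}|^2 \bigr) \Exp\bigl( |X^r_n|^2 \bigr) },
\end{equation}
so it suffices to prove (i) $n^{-1} \Exp\bigl( |M^r_{\tau^{-1}_n}|^2 \bigr) \to C^r/\Exp(T)$ and (ii) $n^{-1}\Exp\bigl(|X^r_n|^2\bigr)\to0$. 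Since $X^r_n$ is bounded by a deterministic constant, (ii) is immediate (with $C^r/\Exp(T)$ interpreted as $0$ when $\Exp(T)=\infty$), and this is a genuine simplification over the persistent case, where $|X^p_n|$ could be as large as $T_{\tau^{-1}_n}$ and required the uniform-integrability argument.

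For (i), I would invoke Proposition~\ref{pro:Mr} together with the optional-sampling argument used in Proposition~\ref{pro:dcSp}: because $\tau^{-1}_1, \tau^{-1}_2, \dots$ is an increasing sequence of stopping times with respect to $\{\fld{F}_n\}$, the process $\bigl\{ |M^r_{\tau^{-1}_n}|^2 - \tau^{-1}_n C^r : n\in\N \bigr\}$ is a martingale with respect to $\bigl\{ \fld{F}_{\tau^{-1}_n} : n\in\N \bigr\}$ (defined exactly as in~\eqref{Ftauinvn}). Hence $\Exp\bigl( |M^r_{\tau^{-1}_n}|^2 \bigr) = \Exp\bigl( |M^r_1|^2 \bigr) + C^r \, \Exp\bigl( \tau^{-1}_n \bigr)$, and dividing by $n$ and letting $n\to\infty$ reduces everything to showing $n^{-1}\Exp(\tau^{-1}_n) \to 1/\Exp(T)$. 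This follows as before from the strong law of large numbers, $n^{-1}\tau_n \asto \Exp(T)$, which gives $n^{-1}\tau^{-1}_n \asto \Exp(T)^{-1}$ (or $\asto 0$ when $\Exp(T)=\infty$), combined with $\tau^{-1}_n \leq n$ a.s.\ and bounded convergence.

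The only point needing a little care — and the place I expect the main (though still minor) obstacle — is the borderline case $\Exp(T)=\infty$ flagged in the remark preceding the proposition. There $\tau^{-1}_n \asto$ grows slower than linearly, so $n^{-1}\Exp(\tau^{-1}_n)\to0$ still holds by the same bounded-convergence argument, and since $|M^r_1|$ is bounded one gets $n^{-1}\Exp\bigl(|M^r_{\tau^{-1}_n}|^2\bigr)\to0$, matching the convention that $C^r/\Exp(T)=0$. Everything else is a direct transcription of the persistent argument, with the uniform-integrability step deleted because $X^r_n$ is uniformly bounded. I would close by remarking that this completes the proof.
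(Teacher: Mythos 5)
Your proposal is correct and follows essentially the same route as the paper: the same decomposition $S^r_n = M^r_{\tau^{-1}_n} + X^r_n$ with $X^r_n$ uniformly bounded, the same Cauchy--Schwarz comparison, the same optional-sampling argument via Proposition~\ref{pro:Mr}, and the same law-of-large-numbers/bounded-convergence step, including the observation that the boundedness of $X^r_n$ removes the uniform-integrability argument needed in the persistent case. (One cosmetic slip: optional sampling gives $\Exp\bigl(|M^r_{\tau^{-1}_n}|^2\bigr) = \Exp\bigl(|M^r_1|^2\bigr) - C^r + C^r\,\Exp\bigl(\tau^{-1}_n\bigr)$, with an extra constant $-C^r$ since $\tau^{-1}_1 = 1$, which of course does not affect the limit.)
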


\begin{proof}
	The proof proceeds in the same way as the proof of 
	Proposition~\ref{pro:dcSp}, and is in fact somewhat simpler. First we want 
	to express $S^r$ in terms of the martingale $M^r$. By the definitions 
	\eqref{Mr} and~\eqref{Sr} we have for all $n\in\N$,
	\begin{equation}
	\begin{split}
		S^r_n
		&= M^r_{\tau^{-1}_n} - D_{\tau^{-1}_n} \, \Bigl( \I\bigl( 
		\tau_{\tau^{-1}_n} - n \text{ is odd} \bigr) + \frac{p}{2 \, (d-p)} \, 
		\bigl( 2L_{\tau^{-1}_n} - 1 \bigr) \Bigr) \\
		&=: M^r_{\tau^{-1}_n} + X^r_n,
	\end{split}
	\end{equation}
	where we have introduced $X^r_n$ to denote the difference between $S^r$ at 
	time~$n$ and $M^r$ at time $\tau^{-1}_n$. By the triangle inequality and 
	H\"older's inequality,
	\begin{equation}
		\Bigl| \Exp\bigl( |S^r_n|^2 - |M^r_{\tau^{-1}_n}|^2 - |X^r_n|^2 \bigr) 
		\Bigr| \leq 2 \, \sqrt{ \Exp\bigl( |\smash[b]{ M^r_{\tau^{-1}_n} }|^2 
		\bigr) \Exp\bigl( |X^r_n|^2 \bigr) }.
	\end{equation}
	Since $|X^r_n|$ is bounded by a constant, it therefore suffices to show 
	that
	\begin{equation}
	\label{prf:dcSr1}
		\lim_{n\to\infty} \frac{1}{n} \, \Exp\bigl( |M^r_{\tau^{-1}_n}|^2 
		\bigr) = \frac{C^r}{\Exp(T)}
	\end{equation}
	in order to prove Proposition~\ref{pro:dcSr}.

	But Proposition~\ref{pro:Mr} and the fact that the $\tau^{-1}_n$ are 
	stopping times with respect to the filtration $\{ \fld{F}_n : n\in\N \}$ 
	imply that $\bigl\{ |M^r_{\tau^{-1}_n}|^2 - \tau^{-1}_n \, C^r : n\in\N 
	\bigr\}$ is a martingale with respect to the filtration $\bigl\{ 
	\fld{F}_{\tau^{-1}_n} : n\in\N \bigr\}$ defined by~\eqref{Ftauinvn}.  
	Therefore,
	\begin{equation}
		\lim_{n\to\infty} \frac{1}{n} \, \Exp\Bigl( |M^r_{\tau^{-1}_n}|^2 - 
		\tau^{-1}_n \, C^r \Bigr) = \lim_{n\to\infty} \frac{1}{n} \, 
		\Exp\Bigl( |M^r_1|^2 - C^r \Bigr) = 0.
	\end{equation}
	The strong law of large numbers dictates that $n^{-1}\tau^{-1}_n \asto 
	\Exp(T)^{-1}$, from which we get by bounded convergence that $n^{-1} 
	\Exp\bigl( \tau^{-1}_n \bigr) \to \Exp(T)^{-1}$. Together with the 
	previous result this implies~\eqref{prf:dcSr1}, proving the proposition.
\end{proof}

\subsection{Weak convergence to Brownian motion}

Weak convergence to Brownian motion for the senile reinforced random walk can 
be shown in the same way as for the persistent case, studied in 
Theorem~\ref{thm:Spweak}. As before, we let $D[0,\infty)$ be the metric space 
of right-continuous real functions on $[0,\infty)$ with left-hand limits, and 
we write~$W$ for Brownian motion and $\weak_n$ for weak convergence with~$n$.    
We assume $\Exp(T)<\infty$ to work in the diffusive regime of 
Proposition~\ref{pro:dcSr}. Then, setting $S^r_0 := 0$, the following holds:

\begin{theorem}
\label{thm:Srweak}
	Assume $\Exp(T) < \infty$. For every $t\geq0$ and $n\in\N$, define
	\begin{equation}
		Z^n_t := \sqrt{ \frac{d \, \Exp(T)}{n \, C^r} } \: S^r_{\lfloor nt 
		\rfloor},
	\end{equation}
	where $C^r$ is the diffusion constant of~$W^r$ appearing in 
	Proposition~\ref{pro:dcWr}. Then $Z^n \weak_n W$ in the sense of 
	$D[0,\infty)^d$.
\end{theorem}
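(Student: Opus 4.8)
The plan is to mirror the proof of Theorem~\ref{thm:Spweak} as closely as possible, since the reinforced case is structurally the same but technically simpler. First I would use the decomposition $S^r_n = M^r_{\tau^{-1}_n} + X^r_n$ from the proof of Proposition~\ref{pro:dcSr}, where $|X^r_n|$ is bounded by a \emph{constant} (not merely a quantity controlled by $\max_{k\le n} T_k$, as in the persistent case). Consequently $\sup_{0\le s\le t} |X^r_{\lfloor ns\rfloor}|/\sqrt n \to 0$ deterministically, so it suffices to prove weak convergence of the rescaled martingale $\sqrt{d\,\Exp(T)/(n\,C^r)}\,M^r_{\tau^{-1}_{\lfloor nt\rfloor}}$ to $d$-dimensional Brownian motion. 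I would then set up the triangular array of martingale differences $\xi^i_{nk}$ exactly as in Theorem~\ref{thm:Spweak}, taking one-dimensional marginals $M^i_{\tau^{-1}_n}$ and differences $\xi^i_{nk} = \sqrt{d\,\Exp(T)/(n\,C^r)}\,(M^i_{\tau^{-1}_k} - M^i_{\tau^{-1}_{k-1}})$, adapted to the filtration $\{\fld{F}_{\tau^{-1}_k}\}$.

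The two conditions of Billingsley's Theorem~18.2 are then checked as before. For the conditional-variance condition, I would use Proposition~\ref{pro:Mr} (together with the symmetry of $W^r$, giving that $(M^i_{\tau^{-1}_n})^2 - \tau^{-1}_n C^r/d$ is a martingale for each $i$) to show $(\sigma^i_{nk})^2 = \Exp((\xi^i_{nk})^2 \mid \fld{F}_{\tau^{-1}_{k-1}}) = (\Exp(T)/n)(\tau^{-1}_k - \tau^{-1}_{k-1})$, using that $\tau^{-1}_k$ is $\fld{F}_{\tau^{-1}_{k-1}}$-measurable (the same argument via $\{\tau^{-1}_k \le l\} \cap \{\tau^{-1}_{k-1}\le m\} = \{\tau_l \ge k\}\cap\{\tau_m\ge k-1\}$ works verbatim). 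The strong law then gives $\sum_{k\le nt}(\sigma^i_{nk})^2 \asto t$. For the Lindeberg-type condition, I would bound $|\xi^i_{nk}|$ using~\eqref{Mrinc}: the increment $M^r_{n+1} - M^r_n$ has length at most $p/(d-p) + 1$, a \emph{constant}, so each $|\xi^i_{nk}| \le \sqrt{d\,\Exp(T)/(n\,C^r)}\,(p/(d-p)+1) \to 0$ uniformly in $k$; hence the indicator $\I(|\xi^i_{nk}| \ge \epsilon)$ vanishes for $n$ large, and the sum $\sum_{k\le nt}\Exp((\xi^i_{nk})^2\I(|\xi^i_{nk}|\ge\epsilon))$ is eventually identically zero. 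This is where the reinforced case is genuinely easier: no uniform-integrability argument for $\max_k T_k$ is needed, only $\Exp(T) < \infty$ (to make the time-change law of large numbers and the bounded-convergence step work).

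Having established weak convergence of each one-dimensional marginal $Y^{ni}_t := \sqrt{d\,\Exp(T)/(n\,C^r)}\,M^i_{\tau^{-1}_{\lfloor nt\rfloor}}$ to Brownian motion, I would upgrade to joint convergence of $Y^n = (Y^{n1},\dots,Y^{nd})$ exactly as in the persistent case: tightness follows because each marginal family is tight (from the proof of Billingsley's Theorem~18.2) and finite products of compact sets in $D[0,\infty)$ are compact. For the finite-dimensional distributions, I would use the orthogonality relation $\Exp((M^i_{n+1}-M^i_n)(M^j_{n+1}-M^j_n)\mid\fld{F}_n) = 0$ for $i\ne j$ — which holds by the same reasoning via~\eqref{Mrinc}, since on $E_{n1}^c$ one has $D_{n+1} = D_n(2L_n-1)$ so the cross term is a signed copy of $D^i_n D^j_n = 0$, and on $E_{n1}$ the vector $D_{n+1}$ is symmetric orthogonal to $D_n$ so $D^i_{n+1}D^j_{n+1}$ averages to zero — to conclude that the $\xi^i_{nk}\xi^j_{nk}$ are martingale differences. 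Then the linear-combination trick with $\eta_{nk} = \sum_i(a_i+b_i)\xi^i_{nk}$ on $k\le\lfloor ns\rfloor$ and $\sum_i b_i\xi^i_{nk}$ afterwards, combined with Billingsley's Theorem~18.1 and the Cram\'er--Wold device, delivers convergence of all finite-dimensional distributions to those of $d$-dimensional Brownian motion. Combined with tightness, this gives $Y^n \weak_n W$ in $D[0,\infty)^d$, and then $Z^n \weak_n W$ by the uniform closeness of $S^r$ to $M^r_{\tau^{-1}}$. I do not anticipate a real obstacle; the only point requiring a little care is the measurability of $\tau^{-1}_k$ with respect to $\fld{F}_{\tau^{-1}_{k-1}}$, but that argument is already available from the persistent case and transfers unchanged.
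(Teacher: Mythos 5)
Your proposal is correct and follows essentially the same route as the paper's own proof: reduce to the martingale $M^r_{\tau^{-1}}$ via the uniformly bounded correction $X^r_n$, verify Billingsley's two conditions using the constant bound $\sqrt{n}\,|\xi^i_{nk}| \leq \text{const}$ from~\eqref{Mrinc} and the identity $(\sigma^i_{nk})^2 = (\Exp(T)/n)(\tau^{-1}_k - \tau^{-1}_{k-1})$, and then upgrade to $d$ dimensions by the orthogonality of cross increments and the Cram\'er--Wold device. No gaps.
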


\begin{proof}
	For the senile reinforced random walk, the difference $X^r_n$ between 
	$S^r_n$ and $M^r_{\tau^{-1}_n}$, as defined by~\eqref{prf:dcSr1}, is 
	uniformly bounded by a constant. From this it follows that it is 
	sufficient to prove weak convergence to Brownian motion for 
	$M^r_{\tau^{-1}_n}$ instead of $S^r_n$.

	As in the proof of Theorem~\ref{thm:Spweak}, let us write 
	$M^i_{\tau^{-1}_n}$, $i = 1,2,\dots,d$, for the one-dimensional marginals 
	of $M^r_{\tau^{-1}_n}$. For each $n\in\N$, define
	\begin{align}
		\xi^i_{n1}
		&:= \sqrt{\frac{d \, \Exp(T)}{n \, C^r}} \, M^i_{\tau^{-1}_1}
		&\quad i=1,2,\dots,d; \\
		\xi^i_{nk}
		&:= \sqrt{\frac{d \, \Exp(T)}{n \, C^r}} \,
			\Bigl( M^i_{\tau^{-1}_k} - M^i_{\tau^{-1}_{k-1}} \Bigr)
		&\quad i = 1,2,\dots,d; \quad k = 2,3,\dots
	\end{align}
	Then for each~$i$, the $\xi^i_{nk}$ form a triangular array of martingale 
	differences with respect to the filtration $\fld{F} := \{ 
	\fld{F}_{\tau^{-1}_k} : k\in\N \}$.

	By~\eqref{Mrinc} it is clear that all random variables $\sqrt{n} 
	|\xi^i_{nk}|$ are uniformly bounded by a constant. We conclude that for 
	every $\epsilon>0$ and $t\geq0$,
	\begin{equation}
		\sum_{k\leq nt} \Exp\bigl( (\xi^i_{nk})^2 \,
			\I(|\xi^i_{nk}| \geq \epsilon) \bigr) \to_n 0.
	\end{equation}
	Moreover, we can follow the steps \eqref{sigma}--\eqref{BillCond1} in the 
	proof of Theorem~\ref{thm:Spweak} to see that for every $t\geq0$,
	\begin{equation}
		\sum_{k\leq nt} \Exp\bigl( (\xi^i_{nk})^2 \bigm| 
		\fld{F}_{\tau^{-1}_{k-1}} \bigr) \asto_n t.
	\end{equation}
	If we now write, setting $M^r_0 := 0$,
	\begin{equation}
		Y^{ni}_t := \sum_{k\leq nt} \xi^i_{nk}
		= \sqrt{\frac{d \, \Exp(T)}{n \, C^r}} \:
		M^i_{\tau^{-1}_{\lfloor nt \rfloor}},
	\end{equation}
	then Theorem~18.2 in~\cite{Bill99} states that $Y^{ni} \weak_n W$ in the 
	sense of $D[0,\infty)$. In particular, for each~$i$ the laws of the 
	one-dimensional marginals~$Y^{ni}$ form a tight family, which implies that 
	the family of laws of $Y^n = (Y^{n1}, \dots, Y^{nd})$ is tight as well. 
	It remains to show that all finite-dimensional distributions of~$Y^n$ 
	converge to those of $d$-dimensional Brownian motion.

	As in the persistent case, the result will follow if we can show that for 
	$i\neq j$ and all $n\in\N$,
	\begin{equation}
		\Exp\bigl( (M^i_{n+1} - M^i_n)(M^j_{n+1} - M^j_n) \bigm| \fld{F}_n 
		\bigr) = 0.
	\end{equation}
	This can be shown by using~\eqref{Mrinc} and noting that on the event 
	$\{D_n \cdot D_{n+1} \neq 0\}$, $D_{n+1} = D_n \, (2L_n-1)$ whereas on the 
	event $\{D_n \cdot D_{n+1} = 0\}$, $D^i_{n+1} D^j_n$ takes the values 
	$\pm1$ with equal probabilities. It follows that for $i\neq j$ and 
	fixed~$n$, the $\xi^i_{nk} \xi^j_{nk}$ are martingale differences with 
	respect to the filtration~$\fld{F}$. The proof can now be completed as in 
	the persistent case.
\end{proof}

\section*{Acknowledgements}

I would like to thank Mark Holmes for his encouragements to write down these 
notes, Akira Sakai for some useful comments on the manuscript, and both of 
them for acquainting me with senile random walks.

\end{document}